\documentclass[11pt,a4paper]{article}

\usepackage[T1]{fontenc}
\usepackage{lmodern}
\usepackage{microtype}
\usepackage[margin=1.05in]{geometry}
\usepackage{amsmath,amssymb,amsthm,mathtools}
\usepackage{enumitem}
\usepackage{xcolor}
\usepackage{hyperref}
\usepackage[nameinlink,noabbrev]{cleveref}

\hypersetup{
  colorlinks=true,
  linkcolor=blue!55!black,
  citecolor=blue!55!black,
  urlcolor=blue!55!black
}

\allowdisplaybreaks
\newtheorem{theorem}{Theorem}[section]
\newtheorem{lemma}[theorem]{Lemma}
\newtheorem{proposition}[theorem]{Proposition}
\newtheorem{corollary}[theorem]{Corollary}
\theoremstyle{definition}

\theoremstyle{remark}

\newcommand{\Homeop}{\operatorname{Homeo}_+}
\newcommand{\Supp}{\operatorname{Supp}}
\newcommand{\Cl}{\operatorname{Cl}}
\newcommand{\Split}{\operatorname{S}}
\newcommand{\BP}{\operatorname{BP}}

\newcommand{\Dyad}{\mathcal D}
\newcommand{\Z}{\mathbb Z}
\newcommand{\Q}{\mathbb Q}

\newcommand{\cP}{\mathcal P}
\newcommand{\cQ}{\mathcal Q}
\newcommand{\cR}{\mathcal R}
\newcommand{\preceqQI}{\preceq}
\newcommand{\asympQI}{\asymp}

\title{Every Copy of Thompson's Group $F$ in $F$ Is Undistorted}
\author{Gili Golan}
\date{}

\begin{document}
\maketitle

\begin{abstract}
Thompson's group $F$ is the group of all orientation-preserving piecewise-linear
homeomorphisms of the unit interval whose breakpoints are dyadic and whose slopes are
integer powers of $2$.  If
$H$ is a finitely generated subgroup of a finitely generated group $G$, its distortion
measures the difference between the intrinsic word metric of $H$ and the metric induced
from $G$; the subgroup is undistorted when these metrics are equivalent.  Guba and Sapir
asked whether $F$ contains a distorted copy of itself; the same question was also
suggested by Brin.  We answer this question negatively: every subgroup of $F$ isomorphic
to $F$ is undistorted in $F$.
\end{abstract}

\section{Introduction}

Thompson's group $F$ is the group of all orientation-preserving piecewise-linear
homeomorphisms of the unit interval whose breakpoints are dyadic rational numbers and
whose slopes are integer powers of $2$; see \cite{CFP1996}.

Let $\Omega$ be a set and let $\alpha,\beta:\Omega\to[0,\infty)$.  We write
$\alpha\preceqQI\beta$ if there is a constant $C\geq1$ such that
\[
        \alpha(x)\leq C\beta(x)+C
        \qquad(\forall x\in\Omega),
\]
and write $\alpha\asympQI\beta$ if both $\alpha\preceqQI\beta$ and
$\beta\preceqQI\alpha$.

Let $G$ be a finitely generated group. For a finite generating set $X$ of $G$ and
$g\in G$, let $|g|_X$ denote the word length of $g$ with respect to $X$. If $X$ and
$Y$ are two finite generating sets of $G$, then the elements of $X$ have uniformly
bounded $Y$-length, and hence
\[
        |g|_Y\leq C|g|_X
        \qquad(\forall g\in G)
\]
for some $C\geq1$. Interchanging $X$ and $Y$ gives the reverse inequality. Thus the
two word-length functions on $G$ satisfy
\[
        |g|_X\asympQI |g|_Y.
\]
Accordingly, when no particular finite generating set is specified, we write simply
$|g|_G$ for the word length of $g$ in $G$, understood up to $\asympQI$.

If $H\leq G$ is finitely generated, then $H$ is \emph{undistorted} in $G$ if its intrinsic word metric is equivalent to the metric induced from \(G\), that is, if
\[
        |h|_H\asympQI |h|_G.
\]
Equivalently, the inclusion $H\hookrightarrow G$ is a quasi-isometric embedding.
Otherwise $H$ is \emph{distorted} in $G$.

Distortion of subgroups of $F$ has been studied in several papers.  Every finitely
generated abelian subgroup of $F$, as well as the centralizer of every element of $F$,
is undistorted \cite{GubaSapir1999}.  Quasi-isometrically embedded copies of
$F^m\times\mathbb Z^n$ were constructed in \cite{Burillo1999,ClearyTaback2003}, and
Cleary proved that a natural copy of $\mathbb Z\wr\mathbb Z$ is undistorted in $F$
\cite{Cleary2006}.  The stabilizer of every finite set of rational points is undistorted
\cite{GolanSapirStabilizers2018}, and, more generally, every finitely generated closed
subgroup of $F$ is undistorted \cite{GolanPolakSapir2025}.

On the other hand, $F$ also contains distorted finitely generated subgroups.  Guba and
Sapir proved that, for every $d\geq2$, there is a finitely generated solvable subgroup of
$F$ whose distortion is at least $n^d$ \cite{GubaSapir1999}.  Moreover, Davis and
Olshanskii proved that $\mathbb Z\wr\mathbb Z$ contains finitely generated subgroups with
polynomial distortion of every degree \cite{DavisOlshanskii2011}; together with Cleary's
undistorted embedding of $\mathbb Z\wr\mathbb Z$ in $F$ \cite{Cleary2006}, this yields
subgroups of $F$ with polynomial distortion of every integer degree $d\geq2$.

In Sapir's survey \emph{Some group theory problems} \cite{Sapir2007},
Problem~6.10(a) asks whether $F$ contains a distorted copy of itself. The problem is
attributed to Guba and Sapir, and Sapir notes that the same question had also been
suggested by Brin.

Self-embeddings of $F$ were subsequently studied by Wassink \cite{Wassink2011}, partly
with this distortion question in mind.

Note that distorted copies of $F$ are known in other natural ambient groups. Wladis
proved that the natural embedding of $F$ into certain Thompson--Stein groups is
exponentially distorted \cite{Wladis2011}, while Burillo and Cleary proved that the
subgroup of inner automorphisms $\operatorname{Inn}(F)\cong F$ is at least
quadratically distorted in $\operatorname{Aut}(F)$ \cite{BurilloCleary2013}.

We answer \cite[Problem~6.10(a)]{Sapir2007} negatively.

\begin{theorem}\label{thm:main}
Let $\rho:F\to F$ be an injective homomorphism.  Then $\rho(F)$ is undistorted in $F$.
\end{theorem}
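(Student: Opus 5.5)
The plan is to use the fact that word length in $F$ is comparable to a combinatorial complexity count. For $f\in F$ let $N(f)$ be the number of carets in the reduced tree-pair diagram of $f$. By Burillo's estimate, $|f|_F\asympQI N(f)$, and $N(f)$ is in turn comparable to the number of breakpoints of $f$ plus a term measuring how far the dyadic subdivision at the breakpoints is from the trivial one. Since $|\rho(g)|_F\preceqQI|g|_F$ holds trivially, it suffices to prove
\[
|g|_F\preceqQI N(\rho(g))\qquad(g\in F).
\]
I would prove this by recovering, inside $\rho(g)$, a faithful copy of the breakpoint structure of $g$.

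\emph{Step 1: localize to one orbital.} Let $H=\rho(F)$. Every nontrivial normal subgroup of $F$ contains $F'$, and $F'$ is simple. For each component $J$ of $\Supp(H)$, the kernel of the restriction map $F\to\Homeop(J)$ is therefore either trivial or contains $F'$. It cannot contain $F'$ for every $J$, because then $\rho(F')$ would be trivial. Hence there is a component $J$ on which the restricted action $\rho_J$ of $F$ is faithful. Restriction to $J$ does not increase $N$ by more than a constant factor plus a constant, since its breakpoints are a subset of those of $\rho(g)$ together with the two endpoints of $J$. So it suffices to bound $|g|_F$ by the complexity of $\rho_J(g)$.

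\emph{Step 2: rigidity.} The action of $F$ on $(0,1)$ is locally dense, and I would argue that the action of $\rho_J(F)$ on $J$ is also locally dense, passing if necessary to the closure of a minimal invariant set. Faithful locally dense actions of the same group are conjugate by Rubin's theorem. This yields a homeomorphism $\varphi$ from $(0,1)$ onto $J$, or at least onto a sub-interval of $J$, with
\[
\rho_J(g)=\varphi g\varphi^{-1}.
\]
An alternative is a semiconjugacy, in which case I would collapse the wandering intervals and work on the collapsed line. The key consequence is that $\varphi$ transports germs: $g$ and $h$ have the same germ at $p$ if and only if $\rho_J(g)$ and $\rho_J(h)$ have the same germ at $\varphi(p)$.

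\emph{Step 3: transfer breakpoints.} A breakpoint $p$ of $g$ should be detected group-theoretically, for instance as a point where the left and right germs of $g$ differ from those of every element of $F$ that is linear near $p$. Through $\varphi$, this would show that $\rho_J(g)$ is not locally equal to a single element of $\rho(F)$ near $\varphi(p)$. Because $\rho_J(F)$ is finitely generated by piecewise-linear maps with finitely many breakpoints, $\rho_J(g)$ must then have at least one breakpoint in a neighborhood of $\varphi(p)$ that is disjoint from the neighborhoods chosen for other breakpoints of $g$. This gives $\#\BP(\rho(g))\geq\#\BP(g)-C$. The remaining part of $N(g)$ comes from the sizes of the dyadic intervals, that is, from the exponents of the slopes and the depths of the breakpoints. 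I would control it by the same germ comparison applied to the finitely many generators $\rho(x_0)$ and $\rho(x_1)$: the depth at which $g$ acts linearly should force a comparable depth for $\rho_J(g)$, using the fact that $\varphi$ is quasi-symmetric on the finitely many pieces where the generators are linear.

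\emph{Main obstacle.} I expect the hardest part to be this last quantitative step. Rubin's theorem provides only a topological conjugacy $\varphi$, whereas caret counts depend on metric and dyadic data. A priori, $\varphi$ could compress many deep carets of $g$ into a shallow region for $\rho(g)$. My first attempt would be to avoid metric information entirely. I would use the germ-based characterization from Step 3 to show that carets of $g$ correspond injectively, up to bounded multiplicity, to breakpoints of elements of the form $\rho_J(g)$ restricted to pieces. Then I would bound the depth contribution by a combinatorial count: the number of maximal dyadic intervals on which $g$ agrees with a power of a fixed generator. That count is invariant under conjugation by $\varphi$.
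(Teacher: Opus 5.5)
There is a genuine gap, and it sits where you put the main obstacle: nothing in the proposal turns the topological information from Step~2 into a lower bound for the caret count of $\rho(g)$, and Step~3 does not supply one.

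As written, ``$\rho_J(g)$ is not locally equal to a single element of $\rho(F)$ near $\varphi(p)$'' is false, since $\rho_J(g)$ is itself such an element. The intended statement must refer to elements that are \emph{linear} near the point, and linearity is not preserved by a topological (semi)conjugacy. Germ transfer only tells you that the one-sided germs of $\rho_J(g)$ at $\varphi(p)$ are those of $\rho_J(h_1)$ and $\rho_J(h_2)$, where $h_1,h_2$ are linear near $p$ with different slopes. Whether the corresponding one-sided slopes of $\rho_J(h_1)$ and $\rho_J(h_2)$ at $\varphi(p)$ differ is not determined by this; a priori the induced map on slope exponents at $p$ may rescale the left and right exponents by different factors. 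So $\#\BP(\rho(g))\geq\#\BP(g)-C$ does not follow.

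Even granting that bound, breakpoints do not control carets: the standard generators $x_n$ have three breakpoints each, while their caret counts grow linearly in $n$. So everything rests on your depth term, and there the quasi-symmetry of $\varphi$ and the conjugation invariance of a count of maximal \emph{dyadic} intervals are only asserted. In fact $\varphi$ does not preserve dyadic intervals. In general it is not even a homeomorphism: the action of $\rho(F)$ on $J$ can have wandering intervals and need not be locally moving, so Rubin's theorem does not apply. What is available is the Brum--Matte Bon--Rivas--Triestino theorem, which gives a proper monotone semiconjugacy $J\to(0,1)$. Its bounded-support hypothesis is met by any nontrivial element of $\rho(F')$, which has slope $1$ at both ends of $J$.

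The missing idea is to let a \emph{homomorphism} do the quantitative work, so that no metric comparison across the semiconjugacy is ever needed. Put $H=\rho(F)$ and $K=\Cl_F(H)=\Split(H)\cap F$, the elements of $F$ that are piecewise equal to elements of $H$. By Golan--Polak--Sapir, $K$ is finitely generated and undistorted in $F$. The semiconjugacy is continuous and extends to $p:[a,b]\to[0,1]$. Pushing piecewise-$H_J$ maps through $p$ (Proposition~\ref{prop:split-extension}) gives a homomorphism $\beta:K\to\Split(F)$ with $\beta\circ\rho=\mathrm{id}_F$. Its image $L$ is finitely generated with $F\leq L\leq S_q(F)$ for some odd $q$. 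Conjugation by $t\mapsto qt$ maps $L$ into $F_{2,q}$ with $\nu_1(f)\leq\nu_q(\sigma_q(f))\leq q\,\nu_1(f)$ for $f\in F$, so $F$ is undistorted in $L$. Hence
\[
|g|_F\asympQI|g|_L=|\beta(\rho(g))|_L\preceqQI|\rho(g)|_K\asympQI|\rho(g)|_F,
\]
where the middle step is just the Lipschitz property of $\beta$ on the finitely generated group $K$.

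Your Step~1 matches the paper's faithful-orbital lemma. Without the closure theorem and the split-group extension, or some genuinely new quantitative argument replacing them, the proposal does not prove the theorem.
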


\paragraph{Acknowledgments.}
The author was partially supported by the Israel Science Foundation under grant
No.~2275/24.

\section{Preliminaries}

\subsection{Thompson's group \texorpdfstring{$F$}{F} and the groups
\texorpdfstring{$F_{2,r}$}{F(2,r)}}

For an interval $I$, let $\Homeop(I)$ denote the group of orientation-preserving
homeomorphisms of $I$, and let $\Dyad=\Z[1/2]$ denote the set of dyadic rational numbers.

Thompson's group $F$ is the group of all orientation-preserving piecewise-linear
homeomorphisms of $[0,1]$ with finitely many breakpoints in $\Dyad\cap(0,1)$ and slopes
which are integer powers of $2$.  On every affine piece, an element of $F$ has the form
\[
        t\longmapsto 2^n t+d,
        \qquad n\in\Z,\quad d\in\Dyad.
\]

We shall use the standard facts that the derived subgroup $F'$ is simple and that every
nontrivial normal subgroup of $F$ contains $F'$; see
\cite[Theorems~4.3 and~4.5]{CFP1996}.

Brown introduced the generalized Thompson groups $F_{n,r}$ in \cite{Brown1987}.  We
shall only need the binary groups $F_{2,r}$.  For an integer $r\geq1$, let $F_{2,r}$
denote the group of all orientation-preserving piecewise-linear homeomorphisms of
$[0,r]$ with finitely many breakpoints in $\Dyad$ and slopes which are integer powers
of $2$.  Thus $F_{2,1}=F$.  For every $r\geq1$, the group $F_{2,r}$ is isomorphic to
$F$; see \cite{Brown1987}.

\subsection{Dyadic subdivisions and binary forests}

A \emph{standard dyadic interval} in $[0,r]$, where $r\geq1$ is an integer, is an
interval
\[
        \left[\frac{k}{2^n},\frac{k+1}{2^n}\right]
        \subseteq[0,r]
\]
for integers $n,k\geq0$.  A \emph{dyadic subdivision} of $[0,r]$ is a finite
subdivision whose intervals are standard dyadic intervals.  Note that every dyadic
subdivision of $[0,r]$ refines
\[
        [0,1],\ [1,2],\ldots,[r-1,r].
\]

Let
\[
        \cP=(I_1,\ldots,I_m)
\]
be a dyadic subdivision of $[0,r]$, written from left to right.  Its associated finite
binary forest $T(\cP)$ has one vertex for every standard dyadic interval which is a
union of intervals of $\cP$.  The roots are $[0,1],\ldots,[r-1,r]$, the leaves are
$I_1,\ldots,I_m$, and an interval which is not a leaf is joined to its left and right
halves.  A non-leaf vertex together with its two children is called a \emph{caret}.
Starting with $r$ roots, every caret increases the number of leaves by one; hence a
forest with $m$ leaves has $m-r$ carets.

Let
\[
        \cP=(I_1,\ldots,I_m),
        \qquad
        \cQ=(J_1,\ldots,J_m)
\]
be two dyadic subdivisions of $[0,r]$ with the same number of intervals, written from
left to right.  They determine a unique increasing piecewise-linear homeomorphism of
$[0,r]$ which maps $I_i$ linearly onto $J_i$ for every $i$.  This homeomorphism belongs
to $F_{2,r}$, and the pair $(\cP,\cQ)$, or equivalently the pair of forests
$(T(\cP),T(\cQ))$, is called a \emph{forest-pair representation} of it.  Thus every
pair of finite binary forests with $r$ roots and the same number of leaves represents
an element of $F_{2,r}$.  Conversely, every element of $F_{2,r}$ admits a forest-pair
representation.  When $r=1$, it is also called a \emph{tree-pair representation}.

Two consecutive standard dyadic intervals of equal length are \emph{siblings} if their
union is a standard dyadic interval.  Let
\[
        \cP=(I_1,\ldots,I_m),
        \qquad
        \cQ=(J_1,\ldots,J_m)
\]
be a forest-pair representation.  It is \emph{reducible} if there exists $i$,
$1\leq i<m$, such that $I_i,I_{i+1}$ are siblings and $J_i,J_{i+1}$ are siblings.  In
this case, merging both sibling pairs gives another forest-pair representation of the
same element.  A representation is \emph{reduced} if it is not reducible.

Every reducible forest-pair representation can be transformed into a reduced
forest-pair representation of the same element by repeatedly performing such
reductions. Moreover, every element of $F_{2,r}$ has a unique reduced forest-pair
representation; see \cite{Brown1987}.

\subsection{Forest-pair size and word metrics}

For $g\in F_{2,r}$, let $\lambda_r(g)$ denote the number of leaves in either forest of
its reduced forest-pair representation, and put
\[
        \nu_r(g)=\lambda_r(g)-r.
\]
Thus $\nu_r(g)$ is the number of carets in either forest of the reduced representation.

The standard caret estimate for $F$ gives
\[
        |f|_F\asympQI \nu_1(f).
\tag{2.1}\label{eq:property-B}
\]
See \cite{BurilloClearyStein2001}.  We shall need the corresponding comparison for $F_{2,r}$ with
$r$ fixed.

\begin{lemma}\label{lem:fixed-roots}
For every fixed $r\geq1$,
\[
        |g|_{F_{2,r}}\asympQI \nu_r(g).
\tag{2.2}\label{eq:property-B-r}
\]
\end{lemma}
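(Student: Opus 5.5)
The plan is to reduce Lemma~\ref{lem:fixed-roots} to the case $r=1$, i.e.\ to \eqref{eq:property-B}, through an explicit isomorphism $\phi:F_{2,r}\to F$. Under $\phi$ the caret counts will change by at most an additive constant depending only on $r$. Since word length is only defined up to $\asympQI$, for any isomorphism $\phi$ we have $|g|_{F_{2,r}}\asympQI|\phi(g)|_F$: the image under $\phi$ of a finite generating set of $F_{2,r}$ is a finite generating set of $F$. So it suffices to prove
\[
        |\nu_r(g)-\nu_1(\phi(g))|\le C_r \qquad(\forall g\in F_{2,r})
\]
and then combine this with \eqref{eq:property-B}.

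To build $\phi$, fix once and for all a dyadic subdivision $\cR=(K_1,\dots,K_r)$ of $[0,1]$ into $r$ standard dyadic intervals. One choice is $[0,\tfrac12],[\tfrac12,\tfrac34],\dots$, ending with two intervals of length $2^{-(r-1)}$. Its tree $T_0=T(\cR)$ has $r-1$ carets. Let $\psi:[0,r]\to[0,1]$ be the piecewise-linear homeomorphism mapping $[i-1,i]$ affinely onto $K_i$. Then $\psi$ has dyadic breakpoints and power-of-$2$ slopes, and it maps standard dyadic intervals inside $[i-1,i]$ bijectively onto standard dyadic intervals inside $K_i$. Set $\phi(g)=\psi g\psi^{-1}$; this lands in $F$ and is an isomorphism, with inverse given by conjugation by $\psi^{-1}$.

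For the caret comparison, take the reduced forest pair $(\cP,\cQ)$ of $g$, with $\lambda_r(g)$ leaves. Applying $\psi$ to every interval gives subdivisions $\psi(\cP),\psi(\cQ)$ of $[0,1]$ into standard dyadic intervals that refine $\cR$. Their trees are $T_0$ with the $r$ trees of $T(\cP)$ (resp.\ $T(\cQ)$) grafted onto its leaves, and they form a tree-pair representation of $\phi(g)$ with $\lambda_r(g)$ leaves. Hence $\nu_1(\phi(g))\le\lambda_r(g)-1=\nu_r(g)+r-1$. For the reverse inequality, take the reduced tree pair $(\cP',\cQ')$ of $\phi(g)$. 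Replace it by a common refinement with $\cR$: first refine both trees so that the domain tree contains $T_0$, then further refine both trees so that the range tree also contains $T_0$. Each step adds at most $r-1$ carets to a tree, but refining the domain forces the same number of leaf splittings in the image tree, in the corresponding positions, and vice versa.

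I expect this refinement step to be the main obstacle. One has to check that making the domain tree contain $T_0$ costs only $O(r)$ extra leaves in total, and that the subsequent adjustment of the range tree does not undo this. The argument is as follows. Taking the union of a tree with $T_0$ adds at most $r-1$ carets, and the leaves that get split are determined. Splitting a leaf $I_i$ in the domain means splitting $J_i$ correspondingly in the range, which adds the same number of carets there. The second step is symmetric, and it preserves containment of $T_0$ in the domain tree, since refinement only grows trees. This produces a representation of $\phi(g)$ with at most $\lambda_1(\phi(g))+2(r-1)\cdot(\text{depth bound})$ leaves. The depth bound is at most $r$, because splitting a leaf down to the $T_0$ leaves it covers creates at most $r-1$ carets per tree.

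Once both subdivisions refine $\cR$, pulling back by $\psi^{-1}$ gives a forest-pair representation of $g$ in $F_{2,r}$. Reducing it can only decrease the leaf count, so $\nu_r(g)\le\nu_1(\phi(g))+C_r$. Combining the two inequalities with \eqref{eq:property-B} and the isomorphism invariance of word length yields \eqref{eq:property-B-r}.
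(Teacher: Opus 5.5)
Your proposal is correct and follows the paper's proof essentially verbatim. You conjugate by a fixed PL map that matches the leaves of an $r$-leaf tree with the root intervals $[i-1,i]$, and grafting gives $\nu_1(\phi(g))\le\nu_r(g)+r-1$; for the reverse inequality you perform at most $2(r-1)$ simultaneous expansions so that both trees contain that top tree, then strip it off. The extra ``depth bound'' factor in your leaf count is unnecessary, since each of the two steps adds at most $r-1$ leaves, which yields $\nu_r(g)\le\nu_1(\phi(g))+r-1$ exactly as in the paper; the overcount is harmless because it is still a constant depending only on $r$.
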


\begin{proof}
Choose a finite binary tree $R_r$ with $r$ leaves, and let
\[
        \eta_r:[0,1]\longrightarrow[0,r]
\]
be the increasing piecewise-linear homeomorphism which maps the leaf intervals of
$R_r$, from left to right, linearly onto $[0,1],\ldots,[r-1,r]$. Conjugation defines
an isomorphism
\[
        \iota_r:F_{2,r}\longrightarrow F,
        \qquad
        \iota_r(g)=\eta_r^{-1}\circ g\circ\eta_r.
\]
Let $c_r=r-1$, the number of carets of $R_r$, and let $g\in F_{2,r}$.

First, graft the $r$ roots of each forest in the reduced forest-pair representation
of $g$ to the $r$ leaves of $R_r$. The resulting tree pair represents
$\iota_r(g)$ and has $\nu_r(g)+c_r$ carets in each tree. Reduction can only decrease
the number of carets, so
\[
        \nu_1(\iota_r(g))\leq \nu_r(g)+c_r.
\]

Conversely, any finite binary tree can be expanded to contain $R_r$ as a top subtree
by adding at most $c_r$ carets: one takes the common refinement with $R_r$, and the
carets that must be added are among the $c_r$ carets of $R_r$. Start with the reduced
tree-pair representation of $\iota_r(g)$. First expand its domain tree to contain
$R_r$, performing the corresponding simultaneous expansions in the range tree; then
expand the range tree in the same way, again performing simultaneous expansions in
the domain tree. After at most $2c_r$ simultaneous expansions, both trees contain
$R_r$ as a common top subtree. Removing this subtree gives an $r$-root forest-pair
representation of $g$ with at most
\[
        \nu_1(\iota_r(g))+2c_r-c_r
        =\nu_1(\iota_r(g))+c_r
\]
carets in each forest. This representation is not necessarily reduced. Reducing it
if necessary can only decrease the number of carets. Hence
\[
        \nu_r(g)\leq \nu_1(\iota_r(g))+c_r.
\]

The two inequalities show that
\[
        \nu_1(\iota_r(g))\asympQI\nu_r(g).
\]
Transporting a finite generating set across the isomorphism $\iota_r$ shows that
\[
        |g|_{F_{2,r}}\asympQI|\iota_r(g)|_F;
\]
indeed, for corresponding generating sets the two word lengths are equal. Therefore,
by \eqref{eq:property-B},
\[
        |g|_{F_{2,r}}
        \asympQI|\iota_r(g)|_F
        \asympQI\nu_1(\iota_r(g))
        \asympQI\nu_r(g),
\]
as required.
\end{proof}

\subsection{Word lengths under homomorphisms}

Let $G$ and $Q$ be finitely generated groups, with finite generating sets $X$ and $Y$,
and let $\psi:G\to Q$ be a homomorphism.  Put
\[
        C_\psi=\max\{1,|\psi(x)|_Y:x\in X\}.
\]
Then
\[
        |\psi(g)|_Y\leq C_\psi |g|_X
        \qquad(\forall g\in G),
\tag{2.3}\label{eq:hom-length}
\]
since a word of length $n$ in the generators $X$ maps to a product of $n$ elements,
each of $Y$-length at most $C_\psi$.  In particular,
\[
        |\psi(g)|_Q\preceqQI |g|_G.
\]
Applied to an inclusion $H\leq G$, this gives
$|h|_G\preceqQI |h|_H$.  Thus, to prove that $H$ is undistorted in $G$, it is enough
to prove the reverse comparison.  Undistortion is transitive.

We shall use the following observation twice.

\begin{lemma}[Homomorphic detection of undistortion]\label{lem:detector}
Let $G$ and $Q$ be finitely generated groups, let $H\leq G$ be finitely generated, and
let $\psi:G\to Q$ be a homomorphism whose restriction to $H$ is an isomorphism onto
$\psi(H)$.  If $\psi(H)$ is undistorted in $Q$, then $H$ is undistorted in $G$.
\end{lemma}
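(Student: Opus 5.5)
We need to prove that $|h|_H\preceqQI |h|_G$. The plan is to pass through $\psi$: bound the $H$-length of $h$ by the $\psi(H)$-length of $\psi(h)$, then use undistortion of $\psi(H)$ in $Q$, and finally use that homomorphisms do not increase length, by \eqref{eq:hom-length}.

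First, the isomorphism. Fix a finite generating set $S$ of $H$. Since $\psi|_H:H\to\psi(H)$ is an isomorphism, $\psi(S)$ is a finite generating set of $\psi(H)$. For every $h\in H$ we then have
\[
        |h|_S=|\psi(h)|_{\psi(S)},
\]
because words in $S$ representing $h$ correspond bijectively to words in $\psi(S)$ representing $\psi(h)$. Injectivity of $\psi$ on $H$ is what guarantees that a word in $\psi(S)$ evaluating to $\psi(h)$ pulls back to a word in $S$ evaluating to $h$ itself. Thus $|h|_H\asympQI|\psi(h)|_{\psi(H)}$.

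Next, the chain of comparisons. By hypothesis $\psi(H)$ is undistorted in $Q$, so $|\psi(h)|_{\psi(H)}\preceqQI|\psi(h)|_Q$. Applying \eqref{eq:hom-length} to $\psi:G\to Q$ gives $|\psi(h)|_Q\leq C_\psi|h|_G$. Chaining these,
\[
        |h|_H\asympQI|\psi(h)|_{\psi(H)}\preceqQI|\psi(h)|_Q\preceqQI|h|_G .
\]
Combined with the trivial inequality $|h|_G\preceqQI|h|_H$ noted before the lemma, this gives $|h|_H\asympQI|h|_G$, so $H$ is undistorted in $G$.

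There is no real obstacle. The only point needing care is the first step, where injectivity on $H$ is used: without it, $\psi(S)$-words for $\psi(h)$ need not pull back to words for $h$. The constants compose additively and multiplicatively, which is harmless under $\preceqQI$.
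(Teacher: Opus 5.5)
Your proposal is correct and follows the paper's proof essentially verbatim, using the same chain $|h|_H\asympQI|\psi(h)|_{\psi(H)}\preceqQI|\psi(h)|_Q\preceqQI|h|_G$ via \eqref{eq:hom-length}. Your extra details, namely the explicit word correspondence under $\psi|_H$ and the closing appeal to the reverse inequality $|h|_G\preceqQI|h|_H$, just spell out steps the paper leaves implicit.
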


\begin{proof}
Since $\psi|_H:H\to\psi(H)$ is an isomorphism,
\[
        |h|_H\asympQI |\psi(h)|_{\psi(H)}.
\]
Since $\psi(H)$ is undistorted in $Q$, and \eqref{eq:hom-length} gives
$|\psi(h)|_Q\preceqQI |h|_G$, we obtain
\[
        |h|_H
        \asympQI |\psi(h)|_{\psi(H)}
        \preceqQI |\psi(h)|_Q
        \preceqQI |h|_G.
\]

\end{proof}

\subsection{Supports and orbitals}

For $G\leq\Homeop([0,1])$, define
\[
        \Supp(G)=\{x\in(0,1):g(x)\neq x\text{ for some }g\in G\}.
\]
For a single element $g$, write $\Supp(g)=\Supp(\langle g\rangle)$.  A connected
component of $\Supp(G)$ is called an \emph{orbital} of $G$.  Every element of $F$ has
only finitely many orbitals.  Note that if $G=\langle g_1,\ldots,g_m\rangle$, then
\[
        \Supp(G)=\bigcup_{i=1}^m\Supp(g_i),
\]
so every finitely generated subgroup of $F$ has only finitely many orbitals.

\section{Split groups, closures, and finite denominators}

\subsection{Split groups}

Bleak introduced the split group of a subgroup of the group of piecewise-linear
homeomorphisms of an interval \cite{Bleak2008}.  We use the equivalent piecewise
formulation recorded by the author in \cite[Remark~11.11]{GolanGeneration}.

Let $I=[a,b]$ and let $G\leq\Homeop(I)$.  A map $f:I\to I$ is called a
\emph{piecewise-$G$ map} if there are points
\[
        a=t_0<t_1<\cdots<t_n=b
\]
and elements $g_1,\ldots,g_n\in G$ such that
\[
        f|_{[t_{i-1},t_i]}=g_i|_{[t_{i-1},t_i]}
        \qquad(1\leq i\leq n).
\]
The split group $\Split_I(G)$ is the set of all piecewise-$G$ maps.  Note that every
piecewise-$G$ map is automatically an orientation-preserving homeomorphism of $I$:
it is continuous, is strictly increasing on the pieces and hence on $I$, and fixes the two
endpoints.  Common refinements of the finitely many pieces show that $\Split_I(G)$ is
a subgroup of $\Homeop(I)$.  When the interval is clear, we write simply $\Split(G)$.

\subsection{Closure inside \texorpdfstring{$F$}{F}}

Closed subgroups of $F$ were introduced in \cite{GolanSapirSubgroups2017} using the
Stallings $2$-core of a subgroup of $F$, a construction defined by Guba and Sapir for
subgroups of diagram groups.
The Stallings $2$-core of a subgroup $L\leq F$ has a natural notion of accepting
elements of $F$.  The \emph{closure} of $L$, denoted $\Cl_F(L)$, is the subgroup of
$F$ consisting of all elements accepted by the core.  The core and closure constructions
for subgroups of $F$ were studied further in
\cite{GolanGeneration,GolanPolakMaximal2025,GolanPolakSapir2025}.

We shall not need the core itself here, but only the following equivalent description of
the closure.  For every subgroup $L\leq F$,
\[
        \Cl_F(L)=\Split(L)\cap F.
\tag{3.1}\label{eq:closure-split}
\]
Equivalently, $\Cl_F(L)$ consists precisely of those elements of $F$ which are piecewise
equal to elements of $L$; see \cite[Theorem~5.6]{GolanGeneration}.  A subgroup
$L\leq F$ is called \emph{closed} if
\[
        \Cl_F(L)=L.
\]

The intersection with $F$ in \eqref{eq:closure-split} is essential in general: the
split group $\Split(L)$ may contain elements with non-dyadic breakpoints, and therefore
need not itself be a subgroup of $F$.

We shall use the following theorem of the author and Sapir
\cite[Theorem~1.4]{GolanPolakSapir2025}.

\begin{theorem}\label{thm:closed-undistorted}
If $L\leq F$ is finitely generated, then $\Cl_F(L)$ is finitely generated and
undistorted in $F$.
\end{theorem}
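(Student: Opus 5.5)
The plan is to work with the Stallings $2$-core of $L$ rather than with the description \eqref{eq:closure-split}. The piecewise description alone gives no control over the sizes of the elements of $L$ used on each piece, whereas the core is a finite object that records exactly which local patterns of carets are allowed. The structure is: (i) prove that the core $\mathcal C(L)$ is finite when $L$ is finitely generated; (ii) identify $\Cl_F(L)$ with a diagram-type group over this finite complex, which gives finite generation; (iii) prove a caret estimate for that group, $|h|_{\Cl_F(L)}\preceqQI\nu_1(h)$. Combined with \eqref{eq:property-B}, step (iii) yields $|h|_{\Cl_F(L)}\preceqQI|h|_F$, and the reverse inequality is automatic by \eqref{eq:hom-length}.

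\textbf{Steps (i) and (ii): finiteness of the core and finite generation.}
\begin{enumerate}[label=(\arabic*)]
\item Fix a finite generating set $l_1,\dots,l_k$ of $L$ and take reduced tree-pair diagrams for the $l_i$.
\item Glue the $k$ diagrams along their top and bottom roots. Then perform the Guba--Sapir folding operations: identify equal-labelled cells with a common boundary, and cancel dipoles. Each operation lowers a finite complexity, so the process terminates in a finite labelled $2$-complex $\mathcal C$ with a distinguished base edge.
\item By the definition of acceptance, $h\in\Cl_F(L)$ if and only if the reduced diagram of $h$ admits a label-preserving immersion into $\mathcal C$ sending the root edge to the base edge.
\item Consequently $\Cl_F(L)$ coincides with the diagram group of the finite directed $2$-complex $\mathcal C$ based at that edge, intersected with $F$ under the obvious identification. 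Here I would use that the cells of $\mathcal C$ are caret-shaped, i.e.\ every cell has one top edge and two bottom edges, so every accepted diagram is automatically a tree-pair diagram.
\item Diagram groups over finite complexes of this type, with bounded cell boundaries, are finitely generated by the Guba--Sapir presentation. The generators are the ``loop'' diagrams obtained from a spanning tree of the finite set of $1$-cells together with each $2$-cell, and there are finitely many of them.
\end{enumerate}

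\textbf{Step (iii): the length estimate.} Given $h\in\Cl_F(L)$ whose reduced tree pair has $n=\nu_1(h)$ carets, I want to write $h$ as a product of $O(n)$ of these generators. Read the immersed diagram of $h$ into $\mathcal C$ as a sequence of $2n$ cells: $n$ carets in the top tree and $n$ in the bottom tree. Build the word inductively, peeling off one caret at a time from the top tree and one from the bottom tree. Each peel is implemented by a generator from step (ii) conjugated by a ``spine'' that routes from the base edge to the current $1$-cell of $\mathcal C$.

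\textbf{Main obstacle.} The difficulty is that these conjugating spines might have unbounded length, which would only give a quadratic bound. I would try to avoid this in one of two ways:
\begin{itemize}
\item Choose generators that act on whole leaf-sequences, not single edges. That is, take generators indexed by (state of $\mathcal C$, one cell), acting in the diagram group of $\mathcal C$ based at a path rather than at an edge, in the spirit of Burillo's and Guba--Sapir's arguments that caret number controls word length in $F$ and in diagram groups.
\item Alternatively, reduce the problem to \eqref{eq:property-B}. Relabel the finitely many states of $\mathcal C$ so that the accepted diagrams form a finite-index-like ``colored'' version of $F$, with word length comparable to caret number by the standard normal-form argument of \cite{BurilloClearyStein2001}.
\end{itemize}
Making this linear bound rigorous, and in particular handling states of $\mathcal C$ that are visited many times, is the step I expect to require the most care.
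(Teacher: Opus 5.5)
\textbf{There is a genuine gap.} The paper does not prove this statement; it imports it from \cite[Theorem~1.4]{GolanPolakSapir2025}. Your proposal therefore has to stand on its own, and neither of its two substantive claims is established.

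\textbf{Finite generation (your item (5)).} This item is false as stated. The recipe ``a spanning tree of the $1$-cells together with one loop per $2$-cell'' is the recipe for generators of $\pi_1(\mathcal C)$, not of the diagram group $D(\mathcal C,e)$. The diagram group is the fundamental group of the Squier complex of $\mathcal C$, whose vertices are \emph{all} directed $1$-paths of $\mathcal C$. The Guba--Sapir presentation obtained this way is in general infinite.

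Already for the Dunce hat (one $1$-cell $x$, one $2$-cell $x\to x^2$), whose diagram group is $F$, your recipe produces a single generator, but $F$ is not cyclic. Nor does any general principle rescue the claim. Take the finite complex with $1$-cells $a,b$ and caret-shaped $2$-cells $a\to ab$ and $a\to ba$. Its diagram group at $a$ is free of infinite rank. The component of $a$ in the Squier complex is the grid graph on the words $b^iab^j$, and it has no $2$-cells, because any two applications overlap at the unique $a$. So finite generation of $\Cl_F(L)$ is real content. It must use that the core is folded and comes from a finitely generated subgroup, and your proposal gives no argument for it.

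\textbf{The linear bound (your step (iii)).} This step only restates the problem. You correctly identify the obstruction: spines to states of $\mathcal C$ that are visited many times can be long, which yields only a quadratic bound. But neither proposed remedy is carried out, and the second cannot work as described. $\Cl_F(L)$ is in general of infinite index in $F$, so no finite-index reduction is available. The arguments behind \eqref{eq:property-B} rely on the structure of $F$ itself (its standard generators and normal form), which has no counterpart in an arbitrary closed subgroup.

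What is missing is exactly the content of the cited theorem. You would need an explicit finite generating set for $\Cl_F(L)$, together with a procedure that writes any reduced diagram accepted by the core with $n$ carets as a product of $O(n)$ of these generators. Your steps (i), (3) and (4) are standard and correct: the core of a finitely generated subgroup is finite, and $\Cl_F(L)$ is the diagram group of the core. However, they only set up the problem and do not prove either finite generation or undistortion.
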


\subsection{Breakpoints and the groups \texorpdfstring{$S_q(F)$}{Sq(F)}}

For a finite piecewise-linear homeomorphism $f$ of an interval, a point $x$ in the
interior of the interval is a \emph{breakpoint} if $f$ is not affine on any neighborhood
of $x$, equivalently if its left and right slopes at $x$ are different.  Let $\BP(f)$
denote the finite set of breakpoints.

\begin{lemma}\label{lem:rational-breakpoints}
If $f\in\Split(F)$, then $f$ is a finite piecewise-linear homeomorphism, each affine
piece has the form
\[
        t\longmapsto 2^n t+d,
        \qquad n\in\Z,\quad d\in\Dyad,
\]
and every breakpoint of $f$ is rational.
\end{lemma}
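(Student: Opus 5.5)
The plan is to unwind the definition of $\Split(F)=\Split_{[0,1]}(F)$ directly. Let $f\in\Split(F)$, and fix points $0=t_0<t_1<\cdots<t_n=1$ and elements $g_1,\ldots,g_n\in F$ with $f|_{[t_{i-1},t_i]}=g_i|_{[t_{i-1},t_i]}$. Each $g_i$ is a finite piecewise-linear homeomorphism whose affine pieces have the form $t\mapsto 2^m t+d$ with $m\in\Z$ and $d\in\Dyad$. Refine the partition $\{t_i\}$ by adding all breakpoints of all the $g_i$ that lie in the corresponding intervals $(t_{i-1},t_i)$. On every interval of the refined partition, $f$ coincides with a single affine piece of some $g_i$. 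Hence $f$ is finite piecewise-linear, and each of its affine pieces has the stated form. This proves the first two assertions; continuity of $f$ is already noted in the definition of piecewise-$G$ maps.

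For rationality of breakpoints, let $x\in(0,1)$ be a breakpoint of $f$. Then $x$ is an endpoint shared by two adjacent intervals of the refined partition. On the left of $x$, $f$ agrees with an affine map $t\mapsto 2^{m}t+d$, and on the right with $t\mapsto 2^{m'}t+d'$, where $m,m'\in\Z$ and $d,d'\in\Dyad$. Since $x$ is a breakpoint, the slopes differ, so $m\neq m'$. By continuity of $f$ at $x$,
\[
2^{m}x+d=2^{m'}x+d',
\qquad\text{so}\qquad
x=\frac{d'-d}{2^{m}-2^{m'}}.
\]
Here the numerator lies in $\Dyad\subseteq\Q$ and the denominator is a nonzero rational number. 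Therefore $x\in\Q$.

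The only point that needs care is the continuity used in the last step. It holds because each $g_i$ is continuous and the pieces agree at the endpoints $t_i$: $f$ is well defined there, since $g_i(t_i)=g_{i+1}(t_i)$ by the definition of a piecewise-$G$ map. I therefore do not expect any real obstacle. The fact worth recording is that the denominator $2^m-2^{m'}$ is in general not a power of $2$; this is exactly why breakpoints may be non-dyadic rationals, as the paper remarked after \eqref{eq:closure-split}.
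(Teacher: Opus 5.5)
Your proof is correct and follows the paper's own argument: refine a piecewise-$F$ presentation at the breakpoints of the finitely many local elements, then solve the continuity equation at a breakpoint for $x$. The only cosmetic difference is that you take $m\neq m'$ directly from the slope-difference characterization of breakpoints. The paper instead derives it by observing that equal slopes together with continuity would force the two affine formulas to coincide.
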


\begin{proof}
Choose a piecewise-$F$ presentation and refine it at the breakpoints of the finitely many
local elements.  This gives the asserted affine formulas.  Let $x$ be a breakpoint.  On sufficiently small left and right neighborhoods of $x$,
the two affine formulas have the form
\[
        2^m t+c
        \qquad\text{and}\qquad
        2^n t+d,
        \qquad c,d\in\Dyad.
\]
Continuity at $x$ gives
\[
        2^m x+c=2^n x+d.
\]
If $m=n$, then $c=d$, so the two affine formulas coincide, contrary to the
definition of a breakpoint.  Hence $m\neq n$ and
\[
        x=\frac{d-c}{2^m-2^n}\in\Q.
\]
\end{proof}

For a positive odd integer $q$, put
\[
        A_q=[0,1]\cap\left(\frac1q\Dyad\right)
\]
and define
\[
        S_q(F)=\{f\in\Split(F):\BP(f)\subseteq A_q\}.
\tag{3.2}\label{eq:Sq-def}
\]
Note that $f\in\Split(F)$ belongs to $S_q(F)$ if and only if it admits a piecewise-$F$
presentation whose splitting points all lie in $A_q$.

\begin{proposition}\label{prop:Sq}
For every positive odd integer $q$, the set $S_q(F)$ is a subgroup of $\Split(F)$, and
\[
        \Split(F)=\bigcup_{q\text{ odd}}S_q(F).
\tag{3.3}\label{eq:Sq-union}
\]
\end{proposition}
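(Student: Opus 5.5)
We must show two things. First, $S_q(F)$ is a subgroup of $\Split(F)$. Second, every element of $\Split(F)$ lies in some $S_q(F)$ with $q$ odd.

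\textbf{Union.} Let $f\in\Split(F)$. By Lemma~\ref{lem:rational-breakpoints}, $\BP(f)$ is a finite set of rationals. Write each breakpoint as $x=\frac{a}{2^k q_x}$ with $q_x$ odd, and let $q$ be the least common multiple of the $q_x$; it is odd. Each $x$ then lies in $\frac1q\Dyad$, since $\frac{a}{2^kq_x}=\frac{a(q/q_x)}{2^k q}$. Hence $\BP(f)\subseteq A_q$, which gives \eqref{eq:Sq-union}. Since $q\mid q'$ implies $A_q\subseteq A_{q'}$, the union is also directed.

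\textbf{Identity and inverses.} The identity has no breakpoints, so it lies in $S_q(F)$. The key observation is the following claim: every $g\in\Split(F)$ maps $A_q$ bijectively onto $A_q$.

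To prove the claim, note that on each affine piece $g(t)=2^nt+d$ with $d\in\Dyad$. If $t\in\frac1q\Dyad$, then $2^nt+d\in\frac1q\Dyad$, because $\Dyad\subseteq\frac1q\Dyad$ and $\frac1q\Dyad$ is a $\Dyad$-module. This also covers points at which two pieces meet, since either formula applies there by continuity. So $g(A_q)\subseteq A_q$. The inverse $g^{-1}$ is also in $\Split(F)$, so the same argument gives $g^{-1}(A_q)\subseteq A_q$, and equality follows.

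For inverses, $\BP(f^{-1})=f(\BP(f))$, since a corner of $f$ at $x$ is a corner of $f^{-1}$ at $f(x)$. By the claim, $f(\BP(f))\subseteq A_q$. So $f^{-1}\in S_q(F)$.

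\textbf{Products.} For $f,g\in S_q(F)$ we have
\[
\BP(f\circ g)\subseteq \BP(g)\cup g^{-1}(\BP(f)).
\]
Indeed, away from this set both $g$ near $x$ and $f$ near $g(x)$ are affine. Now $\BP(g)\subseteq A_q$, and $g^{-1}(\BP(f))\subseteq g^{-1}(A_q)=A_q$ by the claim. So $f\circ g\in S_q(F)$.

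Finally, $\Split(F)$ is itself a group. So $S_q(F)$ is a subgroup of $\Split(F)$.

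The only step requiring care is the invariance claim. Its content is that affine maps with power-of-two slopes and dyadic intercepts preserve $\frac1q\Dyad$, together with checking that the piece endpoints cause no trouble. Since the maps are continuous, the endpoint values agree with both adjacent formulas, so this is routine.
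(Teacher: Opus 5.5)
Your proof is correct and follows the paper's argument essentially step for step. You show $f(A_q)=A_q$ for every $f\in\Split(F)$ using the affine form $2^nt+d$, then use $\BP(f^{-1})=f(\BP(f))$ and $\BP(f\circ g)\subseteq\BP(g)\cup g^{-1}(\BP(f))$ for the subgroup property, and take an odd common multiple of the odd parts of the breakpoint denominators for the union.
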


\begin{proof}
Every $f\in\Split(F)$ maps $A_q$ onto itself.  Indeed, let
\[
        x=\frac{m}{q2^k}\in A_q.
\]
Choose an affine piece of $f$ containing $x$.  On this piece,
\[
        f(t)=2^n t+d,
        \qquad d\in\Dyad,
\]
and hence
\[
        f(x)=2^n\frac{m}{q2^k}+d\in A_q.
\]
Applying the same argument to $f^{-1}$ gives $f(A_q)=A_q$.

If $f,g\in S_q(F)$, then
\[
        \BP(f\circ g)\subseteq \BP(g)\cup g^{-1}(\BP(f))\subseteq A_q,
\]
and
\[
        \BP(f^{-1})=f(\BP(f))\subseteq A_q.
\]
Thus $S_q(F)$ is a subgroup.

Finally, let $f\in\Split(F)$. By Lemma~\ref{lem:rational-breakpoints}, $f$ has finitely
many rational breakpoints. Choose a positive odd integer $q$ divisible by the odd parts
of the reduced denominators of all breakpoints of $f$. Then every breakpoint of $f$
belongs to $A_q$, so $f\in S_q(F)$. This proves \eqref{eq:Sq-union}.
\end{proof}

\subsection{Scaling}

Let
\[
        d_q:[0,1]\longrightarrow[0,q],
        \qquad d_q(t)=qt.
\]
Conjugation by $d_q$ defines an injective homomorphism
\[
        \sigma_q:S_q(F)\longrightarrow F_{2,q},
        \qquad
        \sigma_q(f)=d_q\circ f\circ d_q^{-1}.
\tag{3.4}\label{eq:sigma}
\]
Indeed, because $d_q$ is affine, conjugation simply stretches the breakpoint set, so
\[
        \BP(\sigma_q(f))=q\BP(f)\subseteq\Dyad,
\]
and it preserves all slopes.  Explicitly,
\[
        \sigma_q(f)(t)=qf(t/q).
\tag{3.5}\label{eq:sigma-formula}
\]

\section{The scaled copy of \texorpdfstring{$F$}{F} is undistorted}

Fix a positive odd integer $q$. We prove that the restriction
\[
        \sigma_q|_F:F\longrightarrow F_{2,q}
\]
is a quasi-isometric embedding.

\subsection{The canonical dyadic subdivision}

Let $h\in F_{2,r}$. A standard dyadic interval $I\subseteq[0,r]$ is
\emph{admissible for $h$} if $h$ is affine on $I$ and $h(I)$ is a standard dyadic
interval. Admissibility passes to every standard dyadic subinterval of an admissible
interval. Indeed, every such subinterval belongs to the subdivision of $I$ into
$2^k$ equal intervals for some $k\geq0$, and the affine map $h|_I$ sends this
subdivision onto the corresponding subdivision of $h(I)$.

Starting with the $r$ root intervals $[0,1],\ldots,[r-1,r]$, recursively bisect each
interval that is not admissible. This process terminates. Indeed, let
$(\cP,\cQ)$ be any forest-pair representation of $h$. Every interval of $\cP$,
equivalently every leaf of $T(\cP)$, is admissible. The construction begins at the
roots of $T(\cP)$; whenever it bisects a vertex of $T(\cP)$, that vertex is not a leaf
and its two halves are its children in $T(\cP)$. Thus every interval bisected by the
construction is a non-leaf vertex of the finite forest $T(\cP)$. Hence only finitely
many bisections occur.

Let $\cR=(I_1,\ldots,I_m)$ be the terminal subdivision produced by the process. Since
each $I_i$ is admissible, the intervals $h(I_i)$ are standard dyadic intervals. Because
$h$ is increasing, they occur in the same order, have disjoint interiors, and cover
$[0,r]$. Thus
\[
        (\cR,h(\cR))
\]
defines a forest-pair representation of $h$. It is reduced: if two consecutive sibling
intervals $I_i,I_{i+1}$ and their image intervals could be merged, then their common
parent would itself be admissible and would not have been bisected. Hence $\cR$ is the
domain subdivision of the reduced forest-pair representation of $h$.

Consequently, $\nu_r(h)$ is exactly the number of intervals bisected in this recursive
construction.

\subsection{Admissibility under dilation}

Let
\[
        I=[a,a+\ell]\subseteq[0,1]
\]
be a standard dyadic interval. Thus
\[
        \ell=2^{-n}
\]
for some $n\geq0$, and $a/\ell\in\Z$. The stretched interval $qI$ is the union of the
$q$ consecutive standard dyadic intervals
\[
        I^{(j)}=[qa+(j-1)\ell,\,qa+j\ell],
        \qquad 1\leq j\leq q.
\tag{4.1}\label{eq:copies}
\]
We call them the $q$ copies of $I$.

We shall use two elementary observations. First, if $L=2^{-m}$ for some $m\geq0$,
then an interval
\[
        [b,b+L]
\]
is a standard dyadic interval if and only if
\[
        \frac bL\in\Z,
\]
since the standard dyadic intervals of length $L$ are exactly the intervals
$[kL,(k+1)L]$, $k\in\Z$. Second, the points separating the $q$ copies of $I$ have
preimages under $d_q$ equal to
\[
        a+\frac jq\ell,
        \qquad 1\leq j<q,
\]
and none of these points is dyadic. Indeed, since $a/\ell\in\Z$ and $q$ is odd,
after division by $\ell$ their fractional parts have a nontrivial odd denominator.

\begin{lemma}\label{lem:admissible-copies}
Let $f\in F$ and put $g=\sigma_q(f)$. A standard dyadic interval $I\subseteq[0,1]$
is admissible for $f$ if and only if all its $q$ copies are admissible for $g$.
\end{lemma}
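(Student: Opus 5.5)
We prove the two implications separately, working throughout with formula \eqref{eq:sigma-formula}, $g(t)=qf(t/q)$. Write $I=[a,a+\ell]$ with $\ell=2^{-n}$, and let $I^{(j)}$ be its copies as in \eqref{eq:copies}.

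\emph{Forward direction.} Suppose $I$ is admissible for $f$. Then on $I$ we have $f(t)=2^k t+c$ with $c\in\Dyad$, and $f(I)=[b,b+2^k\ell]$ with $b/(2^k\ell)\in\Z$. On $qI$ this gives $g(t)=2^kt+qc$, so $g$ is affine on each copy $I^{(j)}$. The image of $I^{(j)}$ is
\[
g(I^{(j)})=[qb+(j-1)2^k\ell,\; qb+j2^k\ell].
\]
Its left endpoint divided by its length $2^k\ell$ is $q\,b/(2^k\ell)+(j-1)\in\Z$. By the first observation this image is a standard dyadic interval, so every copy is admissible for $g$.

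\emph{Reverse direction.} Suppose every copy $I^{(j)}$ is admissible for $g$. This is the step I expect to be the main obstacle, because affineness of $g$ on each copy separately does not immediately give affineness on all of $qI$.

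First we show that $g$ is affine on all of $qI$. The only possible breakpoints of $g$ inside $qI$ are the points $qa+j\ell$ with $1\le j<q$, which separate consecutive copies. By \eqref{eq:sigma-formula}, such a point is a breakpoint of $g$ only if its preimage $a+\tfrac jq\ell$ is a breakpoint of $f$. But $f\in F$ has only dyadic breakpoints, and by the second observation these preimages are not dyadic. Hence $g$ is affine on $qI$, and so $f$ is affine on $I$, say $f(t)=2^kt+c$ with $c\in\Dyad$.

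Next we show that $f(I)$ is a standard dyadic interval. The image of the first copy is
\[
g(I^{(1)})=[qf(a),\,qf(a)+2^k\ell],
\]
and admissibility of $I^{(1)}$ says that $qf(a)/(2^k\ell)\in\Z$. Since $f(a)$ is dyadic, $f(a)/(2^k\ell)$ is a dyadic rational, say $u/2^s$ with $u$ odd if $s>0$. Then $qu/2^s\in\Z$ with $q$ odd forces $s=0$. Hence $f(a)/(2^k\ell)\in\Z$, and by the first observation $f(I)$ is a standard dyadic interval. We conclude that $I$ is admissible for $f$.

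The only delicate points are the ones flagged above: the non-dyadic junction points, which rule out breakpoints of $g$ between copies, and the oddness of $q$, which is used both there and in the final divisibility step.
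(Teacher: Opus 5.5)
Your proof is correct and follows the paper's argument essentially step for step. The forward direction is the same image computation. For the converse you use the same two ingredients: breakpoints of $f$ are dyadic while the junction preimages $a+\tfrac jq\ell$ are not, and $q\,b/L\in\Z$ with $b/L$ dyadic and $q$ odd forces $b/L\in\Z$. The only cosmetic difference is that you rule out breakpoints of $g$ at the junctions of $qI$ directly, whereas the paper argues by contradiction from a breakpoint $x$ of $f$ inside $I$, placing $qx$ in the interior of a copy.
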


\begin{proof}
Suppose first that $I$ is admissible for $f$, and write
\[
        f(I)=[b,b+L].
\]
Then $L=2^{-m}$ for some $m\geq0$ and $b/L\in\Z$. The map
$g=d_qfd_q^{-1}$ is affine on $qI$, and
\[
        g(I^{(j)})=[qb+(j-1)L,\,qb+jL].
\tag{4.2}\label{eq:image-copy}
\]
The left endpoint of this interval divided by its length is
\[
        q\frac bL+j-1\in\Z,
\]
so \eqref{eq:image-copy} is a standard dyadic interval. Hence every copy of $I$ is
admissible for $g$.

Conversely, suppose that all $q$ copies of $I$ are admissible for $g$. If $f$ were not
affine on $I$, then $I$ would contain a breakpoint $x$ of $f$ in its interior. Since
$x$ is dyadic, the observation above shows that $qx$ cannot be a boundary between two
copies of $I$. Hence $qx$ lies in the interior of one of the copies. But $qx$ is a
breakpoint of $g$, contradicting the admissibility of that copy. Thus $f$ is affine on
$I$.

Write
\[
        f(I)=[b,b+L].
\]
Since the left endpoint of $I$ is dyadic and $f\in F$, its image $b$ under $f$ is
dyadic; and since both $|I|$ and the slope of $f|_I$ are powers of $2$, we have
$L=2^{-m}$ for some $m\geq0$.
Hence $b/L$ is dyadic. The first copy is admissible, and
\[
        g(I^{(1)})=[qb,qb+L],
\]
so the characterization above gives
\[
        q\frac bL\in\Z.
\]
Since $q$ is odd and $b/L$ is dyadic, this implies $b/L\in\Z$. Therefore
$f(I)=[b,b+L]$ is a standard dyadic interval, and $I$ is admissible for $f$.
\end{proof}

\begin{proposition}\label{prop:scaled-undistorted}
For every positive odd integer $q$, the restriction
\[
        \sigma_q|_F:F\longrightarrow F_{2,q}
\]
is a quasi-isometric embedding.
\end{proposition}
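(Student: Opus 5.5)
The plan is to compare $\nu_q(\sigma_q(f))$ with $\nu_1(f)$ through the canonical recursive bisection from the previous subsection, and then pass to word lengths with Lemma~\ref{lem:fixed-roots} and \eqref{eq:property-B}. One inequality, $|\sigma_q(f)|_{F_{2,q}}\preceqQI |f|_F$, already follows from \eqref{eq:hom-length}, because $\sigma_q|_F$ is a homomorphism. So only the reverse bound is needed, and I will aim for the exact identity
\[
        \nu_q(\sigma_q(f)) \;=\; \nu_1(f) + (\text{bisections forced among the } q \text{ roots}) \;\geq\; q\,\nu_1(f).
\]
In fact the crude bound $\nu_q(\sigma_q(f))\geq \nu_1(f)$ already suffices.

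Fix $f\in F$ and put $g=\sigma_q(f)$. Let $\mathcal B_f$ be the set of standard dyadic intervals in $[0,1]$ that the canonical construction bisects for $f$. By the previous subsection, $|\mathcal B_f|=\nu_1(f)$. The claim is that whenever $I\in\mathcal B_f$, some copy of $I$, and more precisely each of its $q$ copies, is bisected in the construction for $g$ on $[0,q]$. I will prove this by induction on the depth of $I$.

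The key structural point is that the copies of $I$ are not vertices of the forest built from the roots $[0,1],\ldots,[q-1,q]$ in the same way $I$ is a vertex of the tree on $[0,1]$. Because $qI$ has length $q\ell$, the copies $I^{(j)}$ have length $\ell$ and lie inside $[qa,qa+q\ell]$, scattered among roots of $[0,q]$. Reaching them as vertices of the $g$-construction requires that their ancestors be bisected. An ancestor $J\supsetneq I^{(j)}$ is a standard dyadic interval of length $2^k\ell$ with $k\ge1$. So the step I expect to be the main obstacle is showing that ancestors of copies are bisected, not merely that copies themselves are non-admissible.

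For that step I would use the following argument. Suppose the construction for $g$ stops at some interval $J$ containing a copy $I^{(j)}$ of some $I\in\mathcal B_f$. Then $J$ is admissible for $g$, so by heredity $I^{(j)}$ is admissible for $g$. Since $I$ is bisected for $f$, it is not admissible for $f$, and Lemma~\ref{lem:admissible-copies} then says some copy $I^{(j')}$ is non-admissible. This is weaker than what is needed. Instead, I would show directly that every copy is non-admissible, by redoing the two cases of the converse proof of Lemma~\ref{lem:admissible-copies} for each copy separately:
\begin{itemize}[label={}]
\item If $f$ has a breakpoint $x$ in the interior of $I$, then $qx$ is interior to some copy. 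For the other copies, I would instead descend to a child of $I$ in $\mathcal B_f$ or to a leaf and argue there.
\item If $f$ is affine on $I$ but $f(I)$ is not standard, then $b/L\notin\Z$. Since $f(I)\subseteq[0,1]$ forces $b/L$ to be a dyadic of bounded denominator, $qb/L+j-1\notin\Z$ for every $j$, so every copy fails.
\end{itemize}

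Even without all copies, the argument gives an injection $\mathcal B_f\to\mathcal B_g$. Each $I\in\mathcal B_f$ has at least one non-admissible copy, and every ancestor of a non-admissible interval is itself non-admissible, by heredity of admissibility. Hence all vertices above that copy are bisected, and the copy itself is bisected. Choosing one bisected copy per $I$ gives an injective map, because distinct $I$ of the same length have disjoint copies, while intervals of different lengths have copies of different lengths. Therefore $\nu_q(g)\ge\nu_1(f)$. Combining this with Lemma~\ref{lem:fixed-roots} and \eqref{eq:property-B} yields $|f|_F\preceqQI|\sigma_q(f)|_{F_{2,q}}$, which completes the quasi-isometric embedding.
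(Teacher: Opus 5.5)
Your closing argument is correct and is the paper's own proof. You pick one non-admissible copy of each bisected $I$ via Lemma~\ref{lem:admissible-copies}, use heredity of admissibility to see that this copy is reached and bisected, and get injectivity from lengths and disjoint stretched blocks; this is exactly how the paper proves $\nu_1(f)\le\nu_q(\sigma_q(f))$. For the easy direction you invoke \eqref{eq:hom-length}. The paper instead counts carets: the $q$ copies of the leaves of the reduced tree pair of $f$ give $\nu_q(\sigma_q(f))\le q\,\nu_1(f)$. Both work; yours is shorter, and the paper's yields the explicit two-sided bound $\nu_1(f)\le\nu_q(\sigma_q(f))\le q\,\nu_1(f)$.

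You should, however, remove the stronger claims you announce, since they are false. Take $q=3$ and let $f(t)=t/2$ on $[0,\tfrac12]$, $t-\tfrac14$ on $[\tfrac12,\tfrac34]$, and $2t-1$ on $[\tfrac34,1]$. The intervals bisected for $f$ are $[0,1]$ and $[\tfrac12,1]$, so $\nu_1(f)=2$.

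\begin{enumerate}[label=(\roman*)]
\item Not every copy of a bisected interval is bisected. The map $g=\sigma_3(f)$ equals $t/2$ on $[0,\tfrac32]$, so the copy $[0,1]$ of the bisected interval $[0,1]$ is admissible for $g$, with image $[0,\tfrac12]$. Hence the plan in your first bullet to ``descend to a child'' cannot succeed.
\item The claimed bound $\nu_q(\sigma_q(f))\ge q\,\nu_1(f)$ fails. The intervals bisected for $g$ are exactly $[1,2]$, $[\tfrac32,2]$, $[2,3]$, and $[2,\tfrac52]$. So $\nu_3(g)=4<6=q\,\nu_1(f)$; the true inequality is the reverse one.
\end{enumerate}

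Your second bullet, the case of an affine piece with non-standard image where every copy fails, is correct but unnecessary. None of this affects your final proof, which uses only the crude bound $\nu_q(\sigma_q(f))\ge\nu_1(f)$.
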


\begin{proof}
Let $f\in F$ and put $g=\sigma_q(f)$. For every interval $I$ bisected in the canonical
construction for $f$, Lemma~\ref{lem:admissible-copies} gives a copy of $I$ which is
not admissible for $g$; choose one such copy and denote it by $J_I$.

Each $J_I$ is bisected in the canonical construction for $g$. Indeed, since
admissibility passes to standard dyadic subintervals, all the dyadic ancestors of the
nonadmissible interval $J_I$ are also nonadmissible. Thus the construction reaches and
bisects $J_I$.

The assignment $I\mapsto J_I$ is injective. If two source intervals have different
lengths, then so do their chosen copies. Distinct source intervals of the same length
have disjoint interiors, and so do their stretched blocks. Since each chosen copy is
contained in the corresponding stretched block, the chosen copies are distinct.
Consequently,
\[
        \nu_1(f)\leq \nu_q(g).
\tag{4.3}\label{eq:nu-lower}
\]

For the reverse inequality, let the reduced tree-pair representation of $f$ have
$\lambda_1(f)$ leaves. Each of its leaf intervals is admissible, so by
Lemma~\ref{lem:admissible-copies} its $q$ copies are admissible for $g$. These copies give a
forest-pair representation of $g$ with $q\lambda_1(f)$ leaves, hence with
\[
        q\lambda_1(f)-q=q\nu_1(f)
\]
carets. Reduction can only decrease the number of carets, so
\[
        \nu_q(g)\leq q\nu_1(f).
\tag{4.4}\label{eq:nu-upper}
\]
Combining \eqref{eq:nu-lower} and \eqref{eq:nu-upper},
\[
        \nu_1(f)\leq\nu_q(\sigma_q(f))\leq q\nu_1(f).
\tag{4.5}\label{eq:nu-comparison}
\]
By \eqref{eq:property-B} and \eqref{eq:property-B-r}, this is precisely the required
quasi-isometric comparison of word lengths.
\end{proof}

\begin{corollary}\label{cor:F-undistorted-L}
Let $L$ be a finitely generated group satisfying
\[
        F\leq L\leq S_q(F).
\]
Then $F$ is undistorted in $L$.
\end{corollary}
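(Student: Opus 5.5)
Apply Lemma~\ref{lem:detector} with $G=L$, $H=F$, $Q=F_{2,q}$ and $\psi=\sigma_q|_L$. By \eqref{eq:sigma}, $\sigma_q$ is an injective homomorphism $S_q(F)\to F_{2,q}$. Since $L\leq S_q(F)$, its restriction $\psi$ is an injective homomorphism $L\to F_{2,q}$. In particular, $\psi|_F$ is an isomorphism of $F$ onto $\sigma_q(F)$.

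The groups involved are finitely generated. $L$ is finitely generated by hypothesis, $F$ is finitely generated, and $F_{2,q}\cong F$ is finitely generated. So the framework of Lemma~\ref{lem:detector} and of \eqref{eq:hom-length} applies.

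It remains to check that $\psi(F)=\sigma_q(F)$ is undistorted in $F_{2,q}$. This is the content of Proposition~\ref{prop:scaled-undistorted}. It says $\sigma_q|_F$ is a quasi-isometric embedding, that is, $|f|_F\asympQI|\sigma_q(f)|_{F_{2,q}}$. Since $\sigma_q|_F$ is an isomorphism onto its image, $|\sigma_q(f)|_{\sigma_q(F)}\asympQI|f|_F$. Combining these gives $|\sigma_q(f)|_{\sigma_q(F)}\asympQI|\sigma_q(f)|_{F_{2,q}}$, which is precisely undistortion of $\sigma_q(F)$ in $F_{2,q}$.

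Lemma~\ref{lem:detector} then yields that $F$ is undistorted in $L$. There is no genuine obstacle here; the substantive work is already in Proposition~\ref{prop:scaled-undistorted}. The one point to state carefully is that $\psi$ must be defined on all of $L$, not only on $F$. That holds because $L\subseteq S_q(F)$ and $\sigma_q$ is a homomorphism on all of $S_q(F)$.
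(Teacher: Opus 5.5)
Your proposal is correct and is essentially the paper's own argument: you apply Lemma~\ref{lem:detector} to $\sigma_q|_L:L\to F_{2,q}$ and use Proposition~\ref{prop:scaled-undistorted} for the undistortion of $\sigma_q(F)$. You also spell out two things the paper leaves implicit: why a quasi-isometric embedding has an undistorted image, and why $\psi$ is defined on all of $L$.
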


\begin{proof}
Apply Lemma~\ref{lem:detector} to the injective homomorphism
\[
        \sigma_q|_L:L\longrightarrow F_{2,q}.
\]
Its restriction to $F$ is an isomorphism onto the undistorted subgroup
$\sigma_q(F)$ by Proposition~\ref{prop:scaled-undistorted}.
Hence the lemma applies.
\end{proof}

\section{Semiconjugacies and split groups}

We use the notion of semiconjugacy for actions on an open interval employed by
Brum--Matte Bon--Rivas--Triestino \cite[Section~2.1]{BMRT2026}. Let
$I=(a,b)$ and $J=(c,d)$ be nonempty open intervals. A monotone map $p:I\to J$ is called \emph{proper} if
\[
        \inf p(I)=c
        \qquad\text{and}\qquad
        \sup p(I)=d.
\tag{5.1}\label{eq:proper}
\]
Here monotone means either nondecreasing or nonincreasing. If a group $G$ acts on
$I$ and $J$ by
\[
        \alpha:G\longrightarrow\Homeop(I),
        \qquad
        \beta:G\longrightarrow\Homeop(J),
\]
a \emph{semiconjugacy} from $\alpha$ to $\beta$ is a proper monotone map
$p:I\to J$ satisfying
\[
        p\circ\alpha(g)=\beta(g)\circ p
        \qquad(\forall g\in G).
\tag{5.2}\label{eq:semiconjugacy-def}
\]
Semiconjugacy in this sense is an equivalence relation; see Kim--Koberda--Mj
\cite[Lemma~2.2]{KKM2019} and \cite[Remark~2.1.4]{BMRT2026}. We shall also use the
observation that if the target action $\beta$ is minimal, then any semiconjugacy from
$\alpha$ to $\beta$ is continuous \cite[Remark~2.1.13]{BMRT2026}.

The following proposition is the mechanism by which the semiconjugacy will
be used.

\begin{proposition}[Extension to split groups]\label{prop:split-extension}
Let $I$ and $J$ be compact intervals, let
\[
        H\leq\Homeop(I),
        \qquad
        G\leq\Homeop(J),
\]
and let $\phi:H\to G$ be an isomorphism. Suppose that
$p:I\to J$ is a continuous surjective monotone map satisfying
\[
        p(h(t))=\phi(h)(p(t))
        \qquad(\forall h\in H,\ \forall t\in I).
\tag{5.3}\label{eq:semi-phi}
\]
Then for every $k\in\Split_I(H)$ the formula
\[
        \Phi(k)(y)=p(k(t)),
        \qquad t\in p^{-1}(y),
\tag{5.4}\label{eq:Phi-def}
\]
is well defined and defines an element $\Phi(k)\in\Split_J(G)$. The resulting map
\[
        \Phi:\Split_I(H)\longrightarrow\Split_J(G)
\]
is a homomorphism and $\Phi|_H=\phi$.
\end{proposition}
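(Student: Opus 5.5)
Fix $k\in\Split_I(H)$, and fix a piecewise-$H$ presentation of it: points $a=t_0<t_1<\cdots<t_n=b$ and elements $h_1,\ldots,h_n\in H$ with $k|_{[t_{i-1},t_i]}=h_i|_{[t_{i-1},t_i]}$. The plan is to check well-definedness, then show that $\Phi(k)$ is piecewise-$G$ with splitting points $p(t_i)$, and finally verify the homomorphism property and $\Phi|_H=\phi$.

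\emph{Well-definedness.} Since $p$ is continuous, monotone and surjective onto $J$, each fibre $p^{-1}(y)$ is a nonempty closed subinterval of $I$. (If $p$ is nonincreasing the argument is symmetric, so assume $p$ is nondecreasing.) Suppose $t<t'$ lie in the same fibre $p^{-1}(y)$. Then $[t,t']\subseteq p^{-1}(y)$.
\begin{itemize}
\item If $[t,t']$ lies in a single piece $[t_{i-1},t_i]$, then by \eqref{eq:semi-phi}
\[
p(k(t))=p(h_i(t))=\phi(h_i)(p(t))=\phi(h_i)(y),
\]
and the same computation gives $p(k(t'))=\phi(h_i)(y)$, so the two values agree.
\item In general, the splitting points $t_j$ in $(t,t')$ cut $[t,t']$ into consecutive subintervals, each lying in one piece. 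Apply the previous case to each subintervals and chain the equalities through the shared endpoints $t_j$. Each $t_j$ lies in both adjacent pieces, so both formulas apply there.
\end{itemize}
Hence $p(k(t))$ depends only on $y=p(t)$, and \eqref{eq:Phi-def} defines a map $\Phi(k):J\to J$.

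\emph{$\Phi(k)\in\Split_J(G)$.} Put $s_i=p(t_i)$, so $c=s_0\le s_1\le\cdots\le s_n=d$. For $y\in[s_{i-1},s_i]$, surjectivity and monotonicity of $p$ let us pick $t\in[t_{i-1},t_i]$ with $p(t)=y$. Then
\[
\Phi(k)(y)=p(h_i(t))=\phi(h_i)(y).
\]
Discard repeated points $s_{i-1}=s_i$; the remaining strictly increasing points give a piecewise-$\phi(H)=G$ presentation of $\Phi(k)$. By the remark following the definition of split groups, $\Phi(k)$ is then automatically a homeomorphism of $J$.

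\emph{Homomorphism and restriction.}
\begin{itemize}
\item For $k\in H$, take the presentation with a single piece $h_1=k$. The computation above gives $\Phi(k)=\phi(k)$, so $\Phi|_H=\phi$.
\item For $k,k'\in\Split_I(H)$ and $t\in p^{-1}(y)$, the defining formula gives
\[
p\bigl(k'(t)\bigr)=\Phi(k')(y),\qquad\text{so } k'(t)\in p^{-1}\bigl(\Phi(k')(y)\bigr).
\]
Applying the definition of $\Phi(k)$ at the point $k'(t)$ yields
\[
\Phi(kk')(y)=p\bigl(k(k'(t))\bigr)=\Phi(k)\bigl(\Phi(k')(y)\bigr).
\]
Thus $\Phi(kk')=\Phi(k)\Phi(k')$.
\end{itemize}

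The main obstacle is well-definedness. A fibre of $p$ may straddle splitting points, and different pieces use different elements $h_i$, so the value on a fibre could a priori depend on the piece. The argument resolves this by working piece by piece and relying on continuity of $k$ at the splitting points to chain the pieces together. Injectivity of $\phi$ is not needed for any of these steps.
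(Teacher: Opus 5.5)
Your proof is correct and follows essentially the same route as the paper. Both arguments use the same steps:
\begin{itemize}
\item the equivariance relation makes $p\circ k$ constant on each piece of a fibre, and these constants are chained across the splitting points inside the fibre;
\item on each image interval $p([t_{i-1},t_i])$ one has $\Phi(k)=\phi(h_i)$, which gives the piecewise-$G$ presentation once degenerate intervals are discarded;
\item the homomorphism property and $\Phi|_H=\phi$ follow from the intertwining $\Phi(k)\circ p=p\circ k$, which you verify pointwise where the paper invokes surjectivity of $p$.
\end{itemize}
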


\begin{proof}
Fix $k\in\Split_I(H)$. Choose a piecewise-$H$ presentation
\[
        I=[c_0,c_1]\cup\cdots\cup[c_{n-1},c_n],
        \qquad c_0<\cdots<c_n,
\]
and elements $h_1,\ldots,h_n\in H$ such that
\[
        k|_{[c_{i-1},c_i]}=h_i|_{[c_{i-1},c_i]}.
\tag{5.5}\label{eq:k-pieces}
\]
We first prove that \eqref{eq:Phi-def} is well defined.

Let $s,t\in I$ satisfy
\[
        p(s)=p(t)=y.
\]
Assume $s\leq t$. Since $p$ is monotone, it is constant on $[s,t]$: for every
$u\in[s,t]$, monotonicity puts $p(u)$ between the equal values $p(s)$ and $p(t)$.
Thus
\[
        p(u)=y\qquad(\forall u\in[s,t]).
\tag{5.6}\label{eq:fiber-interval}
\]
On every nonempty intersection $[s,t]\cap[c_{i-1},c_i]$, equations
\eqref{eq:semi-phi}, \eqref{eq:k-pieces}, and \eqref{eq:fiber-interval} give
\[
        p(k(u))=p(h_i(u))=\phi(h_i)(p(u))=\phi(h_i)(y).
\tag{5.7}\label{eq:constant-piece}
\]
Hence $p\circ k$ is constant on each such intersection. If $[s,t]$ crosses the
splitting point $c_i$, then $p(c_i)=y$ and
\[
        h_i(c_i)=k(c_i)=h_{i+1}(c_i).
\]
Therefore
\[
        \phi(h_i)(y)
        =p(h_i(c_i))
        =p(h_{i+1}(c_i))
        =\phi(h_{i+1})(y).
\tag{5.8}\label{eq:constants-match}
\]
The constants in \eqref{eq:constant-piece} consequently agree across every splitting
point between $s$ and $t$. Thus $p\circ k$ is constant on $[s,t]$, and in particular
\[
        p(k(s))=p(k(t)).
\]
This proves that \eqref{eq:Phi-def} is independent of the choice of
$t\in p^{-1}(y)$. Since $p$ is surjective, it therefore defines a map
$\Phi(k):J\to J$, characterized by
\[
        \Phi(k)\circ p=p\circ k.
\tag{5.9}\label{eq:Phi-intertwine}
\]

We next show that $\Phi(k)$ is piecewise $G$. For each $i$, let
\[
        E_i=p([c_{i-1},c_i]).
\]
Because $p$ is continuous and monotone, $E_i$ is the closed interval with endpoints
$p(c_{i-1})$ and $p(c_i)$. The nondegenerate $E_i$ cover $J$ and, when listed from
left to right, form a finite subdivision of $J$; if $p$ is nonincreasing, their order is
simply reversed. For $y\in E_i$, choose $t\in[c_{i-1},c_i]$ with $p(t)=y$. Then
\[
        \Phi(k)(y)=p(k(t))=p(h_i(t))=\phi(h_i)(y).
\]
Thus
\[
        \Phi(k)|_{E_i}=\phi(h_i)|_{E_i}.
\tag{5.10}\label{eq:Phi-local}
\]
After deleting the degenerate intervals, these restrictions give a finite piecewise-$G$
presentation of $\Phi(k)$. Hence $\Phi(k)\in\Split_J(G)$.

For $k,\ell\in\Split_I(H)$, equation \eqref{eq:Phi-intertwine} gives
\[
\begin{aligned}
        \Phi(k\circ\ell)\circ p
        &=p\circ k\circ\ell\\
        &=\Phi(k)\circ p\circ\ell\\
        &=\Phi(k)\circ\Phi(\ell)\circ p.
\end{aligned}
\]
Since $p$ is surjective,
\[
        \Phi(k\circ\ell)=\Phi(k)\circ\Phi(\ell).
\]
Thus $\Phi$ is a homomorphism. Finally, if $h\in H$, then
\[
        \Phi(h)\circ p=p\circ h=\phi(h)\circ p,
\]
and surjectivity of $p$ gives $\Phi(h)=\phi(h)$. Hence $\Phi|_H=\phi$.
\end{proof}

\section{A faithful orbital and the semiconjugacy}

Fix an injective homomorphism
\[
        \rho:F\longrightarrow F,
\]
and put
\[
        H=\rho(F),
        \qquad
        K=\Cl_F(H).
\tag{6.1}\label{eq:H-K}
\]

\subsection{A faithful orbital}

\begin{lemma}\label{lem:faithful-orbital}
There is an orbital $J=(a,b)$ of $H$ on which the restricted action of $H$ is faithful.
\end{lemma}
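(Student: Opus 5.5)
We argue by contradiction, using that every nontrivial normal subgroup of $F$ contains $F'$. Since $H=\rho(F)$ is finitely generated, it has finitely many orbitals $J_1,\ldots,J_n$. Note $n\geq1$: $H\neq 1$ because $\rho$ is injective. For each $i$, restriction to $J_i$ gives a homomorphism $r_i:H\to\Homeop(J_i)$. Put
\[
N_i=\rho^{-1}(\ker r_i)\trianglelefteq F.
\]
Suppose that no restricted action is faithful. Then every $\ker r_i$ is nontrivial, and since $\rho$ is injective, every $N_i$ is a nontrivial normal subgroup of $F$. By the standard fact recalled in the preliminaries, $F'\leq N_i$ for all $i$.

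We then show that $\rho(F')$ acts trivially on $\Supp(H)$. For $g\in F'$, the element $\rho(g)$ lies in $\ker r_i$ for every $i$, so it acts trivially on every $J_i$. Outside $\Supp(H)$, every element of $H$ fixes every point by definition of the support. Hence $\rho(g)$ is the identity on $[0,1]$, so $\rho(g)=1$. Injectivity of $\rho$ then gives $g=1$. Thus $F'=1$, which contradicts the fact that $F'$ is nontrivial (indeed simple and infinite; for example, a commutator of two elements with overlapping supports is nontrivial). Therefore some orbital $J=(a,b)$ carries a faithful action of $H$.

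The main point needing care is the identification of $\Supp(H)$ with the disjoint union of the orbitals. Every element of $H$ fixes the complement of $\Supp(H)$ pointwise and preserves each orbital. An element of $H$ is therefore trivial exactly when its restriction to each orbital is trivial, which is what the argument above uses. The finiteness of the set of orbitals is not actually needed for this step, since the intersection of all the $N_i$ still contains $F'$ in any case.

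An alternative presentation is this. The map $H\to\prod_i\Homeop(J_i)$ is injective, so the intersection $\bigcap_i\ker r_i$ is trivial. If every $\ker r_i$ were nontrivial, the pullback of each to $F$ would contain $F'$, so their intersection would contain $F'\neq1$, a contradiction.
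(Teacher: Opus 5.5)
Your argument is correct and is essentially the paper's proof: the kernels of the restricted actions pull back to normal subgroups of $F$ with trivial intersection (since elements of $H$ fix the complement of $\Supp(H)$), so they cannot all contain $F'$; you are also right that finiteness of the set of orbitals is not needed. One small slip in a side remark: a commutator of two elements with overlapping supports need not be nontrivial (an element and its square have the same support and commute), so justify $F'\neq1$ simply by noting that $F$ is nonabelian.
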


\begin{proof}
The group $H$ is finitely generated, so it has finitely many orbitals
$J_1<\cdots<J_m$. Each orbital is $H$-invariant. For each $i$, define
\[
        \rho_i:F\longrightarrow\Homeop(J_i),
        \qquad
        \rho_i(f)=\rho(f)|_{J_i},
\]
and put $N_i=\ker\rho_i$. Then $N_i\trianglelefteq F$ and
\[
        \bigcap_{i=1}^mN_i=\{1\},
\]
because an element in the intersection has image under $\rho$ equal to the identity on
$\Supp(H)$ and also outside $\Supp(H)$. If every $N_i$ were nontrivial, then each
would contain $F'$, a contradiction. Thus some $N_i$ is trivial.
\end{proof}

Fix such an orbital $J=(a,b)$ and put
\[
        H_J=\{h|_J:h\in H\}\leq\Homeop(J).
\]
The restriction map $H\to H_J$ is an isomorphism, by the faithfulness of the action on
$J$. The action of $H_J$ has no global fixed point in $J$. Since every element of $H$
fixes $a$ and $b$, we identify each element of $H_J$ with its unique endpoint-fixing
extension to $[a,b]$.

\subsection{The locally moving theorem}

An action on an open interval is \emph{locally moving} if, for every nonempty open
subinterval $U$, the subgroup of elements supported in $U$ has no global fixed point in
$U$. It is \emph{irreducible} if it has no global fixed point.

We use \cite[Corollary~4.1.2]{BMRT2026} in the following form.

\begin{theorem}[Brum--Matte Bon--Rivas--Triestino]\label{thm:BMRT}
Let $X$ be an open interval, let $G\leq\Homeop(X)$ be locally moving, let $Y$ be an open interval,
and let
\[
        \psi:G\longrightarrow\Homeop(Y)
\]
be a faithful irreducible action. If the support of some nontrivial element in the
$\psi$-action is bounded away from one end of $Y$, then the action on $Y$ is semiconjugate
to the action on $X$.
\end{theorem}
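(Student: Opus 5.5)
Since Theorem~\ref{thm:BMRT} is quoted from \cite{BMRT2026}, the paper only needs it as a black box; still, here is how I would prove it. The plan is to build the semiconjugacy $p:X\to Y$ directly from the rigid stabilizers of the locally moving action. For an open subinterval $U\subseteq X$, write $G_U$ for the subgroup of elements supported in $U$. For $x\in X$, put $L_x=G_{(\inf X,x)}$, the elements supported to the left of $x$. These subgroups are increasing in $x$, and they satisfy
\[
        gL_xg^{-1}=L_{g(x)}\qquad(g\in G).
\]
After possibly reversing the orientation of $Y$, assume the given nontrivial element $\psi(g_0)$ has support bounded away from $\sup Y$. The candidate map is
\[
        p(x)=\sup \Supp\bigl(\psi(L_x)\bigr).
\]
Monotonicity is immediate from the nesting of the $L_x$. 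Equivariance $p(g(x))=\psi(g)(p(x))$ follows from the conjugation identity above.

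\textbf{Step 1: $p(x)<\sup Y$ for every $x$.} This is the main obstacle. It is exactly where one must rule out the ``exotic'' actions of locally moving groups, in which every rigid stabilizer acts with support unbounded on both sides. I would first try to transfer the bound from $g_0$ to some $L_x$. Local moving lets one find elements of $L_x$ for small $x$ that commute with suitable conjugates of $g_0$, or whose supports interact with $g_0$ in a controlled way. A ping-pong/commutator argument, using that $\psi$ is faithful, should then show that $\psi(L_x)$ cannot move points arbitrarily close to $\sup Y$. Conjugating by $G$ then propagates the bound to all $x$. If this fails, the fallback is to also consider the right-sided subgroups $R_x=G_{(x,\sup X)}$. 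Their actions commute with those of the $L_x$, and a commuting pair of subgroups with unbounded supports on both sides contradicts faithfulness together with the boundedness of $g_0$.

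\textbf{Step 2: $p(x)>\inf Y$.} One also needs $\psi(L_x)\neq1$, which holds because $\psi$ is faithful and $L_x\neq1$ by local moving.

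\textbf{Step 3: properness.} The conditions \eqref{eq:proper} come from irreducibility. The image $p(X)$ is $\psi(G)$-invariant, so its infimum and supremum are points of $\overline Y$ fixed by $\psi(G)$. Since $\psi$ has no global fixed point in $Y$, these must be the endpoints of $Y$. This gives a proper monotone equivariant map, i.e.\ a semiconjugacy from the action on $X$ to the action on $Y$. Since semiconjugacy is an equivalence relation \cite{KKM2019}, the action on $Y$ is semiconjugate to the action on $X$.
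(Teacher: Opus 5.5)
The paper does not prove this statement; it imports it from Brum--Matte Bon--Rivas--Triestino. Your Steps~2 and~3 are fine once Step~1 is known. The problem is that Step~1 is false, and not only because of orientation.

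Orientation alone already breaks it. Take $G=F$ on $X=Y=(0,1)$ and $\psi(f)=\tau f\tau^{-1}$ with $\tau(t)=1-t$. Any $1\neq g_0\in F'$ is bounded away from both ends, so nothing forces a reversal. Yet $\Supp\psi(L_x)=(1-x,1)$, so $p\equiv\sup Y$.

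More seriously, no choice among $L_x$ or $R_x$ and $\sup$ or $\inf$ works, even for actions that \emph{are} semiconjugate to the standard one. Here $L_x$ is as you define it, the full rigid stabilizer $G_{(\inf X,x)}$. Let $\lambda(f)=\log_2 f'(0^+)+\log_2 f'(1^-)$, a homomorphism $F\to\mathbb Z$, and let $T$ be a homeomorphism of $[0,1]$ with no interior fixed point. Blow up each dyadic $y\in(0,1)$ into an interval $I_y$, with summable lengths. Let $f\in F$ map $I_y$ onto $I_{f(y)}$ by $T^{\lambda(f)}$ in affine coordinates. The cocycle identity holds because $\lambda$ is a homomorphism.
\begin{itemize}
\item This is a faithful irreducible action on an open interval $Y$, semiconjugate to the standard action by the collapsing map.
\item Every $1\neq g_0\in F'$ has $\lambda(g_0)=0$, so $\psi(g_0)$ has compact support in $Y$. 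Thus the hypothesis holds.
\item An element $f\in L_x$ with $f'(0^+)\neq1$ fixes every dyadic $y>x$ and acts on $I_y$ by $T^{\lambda(f)}\neq1$. Symmetrically, $f\in R_x$ with $f'(1^-)\neq1$ acts nontrivially on every $I_y$ with $y<x$.
\end{itemize}
Hence $\Supp\psi(L_x)$ and $\Supp\psi(R_x)$ are unbounded at both ends of $Y$ for every $x$, and none of your candidate maps takes values in $Y$. The fibres of a semiconjugacy can carry actions of the rigid stabilizers through their endpoint germs.

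The same example disposes of your fallback. There $\psi(L_x)$ and $\psi(R_x)$ commute and both have supports unbounded at both ends. Yet $\psi$ is faithful and $\psi(g_0)$ is compactly supported, so the contradiction you hope for does not exist.

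Two things are therefore missing. First, the semiconjugacy must be extracted from something finer than the supports of the full rigid stabilizers. Second, and above all, you need an actual argument excluding the exotic faithful irreducible actions of locally moving groups. Such actions exist, for instance for $F$, and they are the reason the hypothesis on $g_0$ is there at all. That exclusion is the real content of the BMRT result, and your Step~1 only gestures at it (``a ping-pong/commutator argument \dots{} should then show''). As written, the proposal reduces the theorem to its hard core rather than proving it.
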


The standard action of $F$ on $(0,1)$ is locally moving and minimal; see
\cite{CFP1996,BMRT2026}.

\begin{lemma}\label{lem:bounded-support}
The action of $H_J$ contains a nontrivial element whose support is bounded away from
both endpoints of $J$.
\end{lemma}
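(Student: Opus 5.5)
Since every element of $F$ is a finite piecewise-linear map, the natural approach is to exploit the germs of the generators of $H$ at the two endpoints $a$ and $b$ of $J$, and to produce an element of $H$ whose germs at both ends are trivial. If such an element is nontrivial on $J$, we are done.

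First we define the germ homomorphisms. Every $h\in H$ fixes $a$ and $b$ and is piecewise linear, so there is $\varepsilon>0$ such that $h$ is linear on $[a,a+\varepsilon]$ with slope $2^{n}$ (and similarly on $[b-\varepsilon,b]$). This gives homomorphisms
\[
        \theta_a,\theta_b:H_J\longrightarrow\Z
\]
sending $h$ to the exponent of the right slope of $h$ at $a$ and of the left slope of $h$ at $b$, respectively. Since $h$ fixes $a$ and is linear near $a$, we have $\theta_a(h)=0$ exactly when $h$ is the identity on some right neighborhood of $a$; the same holds at $b$. The point is that these homomorphisms are genuine homomorphisms to an abelian group, because slopes at a fixed point multiply under composition.

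Next, consider the kernel $N=\ker\theta_a\cap\ker\theta_b$, which is a normal subgroup of $H_J$ with abelian quotient. Since $H_J\cong H\cong F$ via the restriction isomorphism, $N$ corresponds to a normal subgroup of $F$ containing $F'$, because every homomorphism from $F$ to an abelian group factors through $F/F'$. Hence $N\supseteq H_J'$, and $H_J'\cong F'$ is nontrivial since $F$ is nonabelian. Pick any nontrivial $h\in H_J'$. Then $\theta_a(h)=\theta_b(h)=0$, so $h$ is the identity on $(a,a+\varepsilon)$ and on $(b-\varepsilon,b)$ for some $\varepsilon>0$. This means $\Supp(h)\subseteq[a+\varepsilon,b-\varepsilon]$, which is bounded away from both endpoints. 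The element is nontrivial on $J$ because the restriction map is injective.

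The main subtle point is justifying that an element with trivial germ slope at $a$ is actually the identity near $a$. This holds because $h$ is affine on $[a,a+\varepsilon]$ with slope $1$ and fixes $a$, so it is the identity there. Here we must make sure the affine piece really starts at $a$, that is, that breakpoints do not accumulate; this is automatic because $h$ has finitely many breakpoints, whether $a$ is dyadic or not. The endpoints $a$ and $b$ may be non-dyadic, or may equal $0$ or $1$, and the argument is insensitive to this.
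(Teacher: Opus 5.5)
Your proposal is correct and follows the paper's argument. The endpoint slope characters at $a$ and $b$ vanish on the commutator subgroup. Hence any nontrivial element of $H_J'$ (the paper takes $\rho(f)|_J$ for some $1\neq f\in F'$) has slope $1$ near both endpoints, and since it fixes $a$ and $b$, it is the identity near each of them.
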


\begin{proof}
Choose $1\neq f\in F'$. Faithfulness implies that $\rho(f)|_J$ is nontrivial, and
$\rho(f)\in H'$. Since every element of $H$ fixes $a$ and $b$, the chain rule shows
that the endpoint slope characters
\[
        \chi_a^+(h)=\log_2 h'(a^+),
        \qquad
        \chi_b^-(h)=\log_2 h'(b^-)
\]
vanish on $H'$. Hence $\rho(f)|_J$ has slope $1$ near both endpoints of $J$. Since it
fixes the endpoints, it is the identity near each endpoint of $J$.
\end{proof}

Apply Theorem~\ref{thm:BMRT} to the standard action of $F$ on $(0,1)$ and the faithful action
on $J$. By symmetry of semiconjugacy, choose a proper monotone semiconjugacy
\[
        p_0:J\longrightarrow(0,1)
\]
in this direction, so that
\[
        p_0(\rho(f)(t))=f(p_0(t))
        \qquad(\forall f\in F,\ \forall t\in J).
\tag{6.2}\label{eq:orbital-semi-open}
\]
Since the target action is minimal, $p_0$ is continuous by
\cite[Remark~2.1.13]{BMRT2026}. Properness means
\[
        \inf p_0(J)=0,
        \qquad
        \sup p_0(J)=1,
\]
and a continuous monotone map with these endpoint extrema is onto $(0,1)$. It also
follows that $p_0$ has limits at the endpoints of $J$, equal to $0$ and $1$ in one
order or the other. Hence $p_0$ extends uniquely to a continuous surjective monotone map
\[
        p:[a,b]\longrightarrow[0,1].
\tag{6.3}\label{eq:p-closed}
\]
Define
\[
        \phi_J:H_J\longrightarrow F,
        \qquad
        \phi_J(h|_J)=\rho^{-1}(h)
        \qquad(\forall h\in H).
\tag{6.4}\label{eq:phiJ}
\]
Since both $\rho:F\to H$ and the restriction map $H\to H_J$ are isomorphisms,
$\phi_J:H_J\to F$ is an isomorphism. Equation
\eqref{eq:orbital-semi-open}, together with continuity at the endpoints, becomes
\[
        p(g(t))=\phi_J(g)(p(t))
        \qquad(\forall g\in H_J,\ \forall t\in[a,b]).
\tag{6.5}\label{eq:orbital-semi}
\]

\section{Proof of the main theorem}

Since $H\cong F$ is finitely generated, Theorem~\ref{thm:closed-undistorted} implies
that $K=\Cl_F(H)$ is finitely generated.

Since $K\subseteq\Split(H)$, every element of $K$ is piecewise $H$. Every element of
$H$ fixes $a$ and $b$, so every element of $K$ preserves $[a,b]$. Restriction therefore
defines a homomorphism
\[
        \operatorname{res}_J:K\longrightarrow\Split_{[a,b]}(H_J).
\]
The restriction of $\operatorname{res}_J$ to $H$ is the isomorphism $H\to H_J$
provided by the faithful action on $J$. We now apply
Proposition~\ref{prop:split-extension} to the isomorphism
$\phi_J:H_J\to F$ and the continuous surjective monotone map
$p:[a,b]\to[0,1]$. The equivariance relation
\eqref{eq:orbital-semi} holds, so the hypotheses of
Proposition~\ref{prop:split-extension} are satisfied. Hence the proposition gives a
homomorphism
\[
        \Phi:\Split_{[a,b]}(H_J)\longrightarrow\Split(F)
\]
whose restriction to $H_J$ is $\phi_J$. Put
\[
        \beta=\Phi\circ\operatorname{res}_J:K\longrightarrow\Split(F)
\]
and let
\[
        L=\beta(K).
\]
Since $K$ is finitely generated, so is $L=\beta(K)$, and
\[
        F=\beta(H)\leq L\leq\Split(F),
\]
while $\beta|_H:H\to F$ is an isomorphism.

Choose a finite generating set of $L$. By \eqref{eq:Sq-union}, there is a positive odd
integer $q$ such that all of its generators belong to $S_q(F)$. Since $S_q(F)$ is a
subgroup,
\[
        F\leq L\leq S_q(F).
\]
By Corollary~\ref{cor:F-undistorted-L}, the subgroup $F$ is undistorted in $L$. Since
$\beta|_H:H\to F$ is an isomorphism and $F$ is undistorted in $L$,
Lemma~\ref{lem:detector}, applied to
\[
        \beta:K\longrightarrow L,
\]
shows that $H$ is undistorted in $K$.

Finally, $K=\Cl_F(H)$ is undistorted in the ambient group $F$ by
Theorem~\ref{thm:closed-undistorted}. Since undistortion is transitive, $H=\rho(F)$ is
undistorted in $F$. This proves Theorem~\ref{thm:main}.

\bigskip
\noindent
\textsc{Department of Mathematics, Ben-Gurion University of the Negev,
Be'er Sheva, Israel}\\
\textit{Email address:}
\href{mailto:golangi@bgu.ac.il}{\texttt{golangi@bgu.ac.il}}

\end{document}